\newcommand{\R}{\mathbb{R}}
\newtheorem{theorem}{Theorem}
\newtheorem{rem}{Remark}
\newtheorem{lemma}{Lemma}
\def\BibTeX{{\rm B\kern-.05em{\sc i\kern-.025em b}\kern-.08em
    T\kern-.1667em\lower.7ex\hbox{E}\kern-.125emX}}
\begin{document}

\title{A mathematical model of Breast cancer (ER+) with excess estrogen: Mixed treatments using Ketogenic diet, endocrine therapy and Immunotherapy\\

%\thanks{Identify applicable funding agency here. If none, delete this.}
}

\author{\IEEEauthorblockN{ Hassnaa AKIL}
\IEEEauthorblockA{\textit{Laboratory LIPIM} \\
\textit{ENSA Khouribga}\\
\textit{University of sultan moulay slimane }\\
Khouribga, Morocco\\
hassna.akil@usms.ac.ma}
\and
\IEEEauthorblockN{ Nadia IDRISSI FATMI }
\IEEEauthorblockA{\textit{Laboratory LIPIM} \\
\textit{ENSA Khouribga}\\
\textit{University of sultan moulay slimane }\\
Khouribga, Morocco\\
nadidrissi200133@gmail.com}
}

\maketitle

\begin{abstract}
Breast Cancer is a major public health problem and the most common diagnosed malignancy in woman. There have been significant developments in clinical approaches and theoretical experimental to understand the interactions of cancer cells dynamics with the immune system, also developments on analytical and computational models to help provide insights into clinical observations for a better understanding of cancer cells, but more are needed, especially at the genetic and molecular levels mathematically. Treatments such as immunotherapy, chemotherapy, hormone therapy, radiotherapy, and gene therapy are the main strategies in the fight against breast cancer. The present study aims at investigating the effects of estrogen derived from recent models, but this time combined with immunotherapy as a way to treat or inhibit the cancer growth by a mathematical model of breast cancer in situ, governed by a simplified model of nonlinear-coupled ordinary differential equations, that combines important interactions between natural cells, tumor cells, immune cells, ketogenic diet in the presence of an anticancer drug.
Another contribution was to introduce the inhibition effect \textbf{$\epsilon$} for new results and conclusions, A qualitative study was performed and biological interpretations were included to understand the conditions of stability in a realistic way.
\end{abstract}

\begin{IEEEkeywords}
Mathematical modelling, ODE's, Dynamical systems, Breast cancer, ER-Positive, Ketogenic diet, Estrogen, Immunotherapy.
\end{IEEEkeywords}

\section{Introduction}
%Statistics:
Female breast cancer was the most common obstructive diseases in women worldwide in 2020 according to the Global Cancer Statistics. In 2020, as reported in CA: A Cancer Journal for Clinicians, an estimated 19.3 million new cancer cases will be diagnosed, with about 10 million people dying from cancer-related causes. Overall, the data revealed that one out of every five men and women in the world will develop cancer over their lifetime. In the future, the analysis estimates that 28.4 million additional cancer cases will be diagnosed globally in 2040, up 47\% from 2020\cite{CA}. The most commonly diagnosed cancer kind in women is breast cancer, with a projected 1.8 million deaths. mainly, rates are high in North America, Australia, New Zealand, and northern and western Europe; lowest in much of Africa and Asia (Torre et al.\cite{torre}). It is also occurring in North Africa, although its incidence is lower than in Western countries (about 1.5 to 2 times lower in women under 50 and 3 to 4 times lower in women over 50) change.

According to The World Health Organization (WHO), in 2020, 2.3 million women worldwide were diagnosed with breast cancer and 685,000 died. By the end of 2020, 7.8 million women had been diagnosed with breast cancer in the past 5 years, making it the most common cancer in the world and the second-largest cancer after lung cancer. Globally, women lose more disability-adjusted life years to breast cancer than any other cancer \cite{0}.

In Morocco there are approximately 50,000 new cancer cases each year. Cancer also accounts for 13.4\% of deaths in the country. Accounting for 20\% of cases. Breast cancer was the most common disease among Moroccan women in 2016, according to a research issued by the Greater Casablanca Cancer Registry of Morocco, accounting for 35.8\% of all new malignancies among women \cite{MR}.

Breast cancer is a disease that affects breast cells and causes unhindered division of mitosis, which can be malignant in breast tissue, it occurs when the cells in the lobules or the ducts become abnormal and divide uncontrollably. These abnormal cells begin to invade the surrounding breast tissue and may eventually spread (Metastasis) via blood vessels and lymphatic channels to the lymph nodes, lungs, bones, brain and liver \cite{meta}.
There are many different types of breast cancer, about 70\% of them are sensitive to the female sex hormone called estrogen. Cells from these cancers have receptor sites that bind to estrogen, thereby promoting their growth and spread \cite{horm}. These cancers are called estrogen receptor-positive cancers (or ER-positive cancers) and it's the type we are discussing in this paper. Cells from tumors are tested to see if they have these receptors. Hormonal (or endocrine) therapy can be used as a treatment in this case such as Tamoxifen \cite{tam}.

However, information on the etiology of this disease remains scarce. For many years Now, there are challenges in researching new ways to understand and fight breast cancer, the causes of breast cancer are not fully understood and not well known yet, so it's hard to say why one woman might develop it and another doesn't. In either way, some risk factors are known to affect the likelihood of developing it. Some of them you can't change, but some you can, after all the three main risk factors for breast cancer namely are, hormonal imbalances, genetical and environmental. Some of the therapies to suppress tumor growth Or inhibit the dynamic of cancer cell are surgery, chemotherapy, radiation therapy, endocrine therapy, targeted therapy and immunotherapy, despite the fact that breast cancer was formerly thought to be difficult to treat with the latter therapy, because to its immunological "coldness," clinical research and novel medications have proven that immunotherapy treatment can enhance breast cancer patient outcomes \cite{imm}. Yet, every treatment has side effects such as hair loss, mood swings, nausea, vomiting, and fatigue, etc...
As with most cancers, the earlier breast cancer is detected and diagnosed, the better the chances of successful treatment.

Over the years, breast cancer modeling has become an invaluable tool for understanding dynamics Behavior of tumor growth during treatment. Some studies show that mathematical modeling helps solve epidemiological problems. Oke et al. \cite{oke} and  Bozkurt Yousef et al.\cite{yousef} Improved Mudufza's model\cite{1}, they have built-in control parameters (ketogenic diet, immune booster and anticancer drugs) hypothetically have interactions between normal and malignant cells. Anyhow, these Studies do not include the concept of a nutritious diet combined with Tamoxifen and immunotherapy in their mathematical models, and even in traditional current treatments used by oncologists and doctors. According to the breast oncologist Dr. Hung Khong, MD \cite{hung}, the majority of research looking at these types of combinations have focused on an other type of cancer: triple-negative breast cancer, however, there is no indication that ER-positive illness is immune to immunotherapy. In fact, there is evidence that ER-positive breast cancer responds to immunotherapy, and it's our main goal using mathematical tools.

This paper is inspired from mathematical models of \cite{1,2,de2009}, but here we introduced the saturation effect of cancer cells and added the immunotherapy drug (ICB Immune checkpoint blockade's type \cite{ICB}) interaction with the immune cells to conclude new results. This work is organized as follow: first section, we construct the mathematical Model of breast dynamics with the presence of cancer treatment, excess estrogen, and a ketogenic diet as a therapeutic tool to help shrink the tumor size \cite{med} (many researches are still needed to see the effect of this diet on cancer patients). The five compartements are modeled by forming a system of differential equations. A qualitative study consists of the existence of equilibria and the study of their local stability, basic reproduction numbers are discussed.

\section{Model description}

In Oke et al.\cite{oke} and de Pillis et al. \cite{de2009} models, and also previous mathematical models published lately, never used the combination of endocrine therapy with a diet and immunotherapy, so we propose a mathematical model to study the dynamics of breast cancer with excess estrogen by considering the saturation effect with the presence of treatments. We present a system of differential equations describing the interactions between normal cells N, tumor cells T , immune cells I, estrogen E and immunotherapy M, then we study the dynamic behavior described by the system bellow:

\begin{equation}
\left\{
\begin{array}{l}
\begin{aligned}
\dfrac{dN}{dt}={} &N(t)(a_1-b_1N(t))-\\
                  &\dfrac{d_1T(t)N(t)}{1+\epsilon T(t)}-l_1N(t)E(t)(1-k),
\end{aligned}\\\\

\begin{aligned}
\dfrac{dT}{dt}={} &T(t)(a_2d-b_2T(t))-g_1I(t)T(t)-m_dT(t)+\\
                  &l_1N(t)E(t)(1-k),
\end{aligned}\\\\
\begin{aligned}
\dfrac{dI}{dt}={} & s+\dfrac{r I(t)T(t)}{o+T(t)}-g_2I(t)T(t)-m I(t)-\\
                  &\dfrac{l_3 I(t)E(t)}{g+E(t)}(1-k)+\dfrac{p_M I(t)M(t)}{j_M+M(t)} ,
\end{aligned}\\\\
\dfrac{dE}{dt}=p(1-k)-\theta E(t),\\\\
\dfrac{dM}{dt}=v_M(t)-n_MM(t)+\dfrac{\chi M(t)I(t)}{\xi+I(t)}.
\end{array} 
\right.
\label{equ}
\end{equation}

We now explain the model parameters and describe the terms biologically for a better understanding of the interactions presented in each equation.
\subsection{Modeling normal cells}
In system (\ref{equ}), normal cells are represented by the first equation, $a_1$ is the logistic growth rate of the normal cells, $b_1$ is the natural death, therefore; it is very reasonable to model the infection rate by a saturated incidence
of the form $\dfrac{d_1TN}{1+\epsilon T}$, where $d_1$ and $\epsilon$ are positive
constants which, respectively describe the inhibition rate of normal cells due to the DNA damage, and the saturation effect. Excess estrogen leads to DNA mutation and thus normal cells population will be reduced and transformed into tumor cells by $l_1NE$ term.
\subsection{Modeling tumor cells}
In the second equation, the first term is the limited growth term for tumor cells that depends on the ketogenic diet doze $d$, $m_d$ is the death rate of tumor cells as a result of starvation of nutrients, glucose, note that keto diet \cite{4} put the body in ketosis that is a metabolism that turns fat into ketones, which helps protect against some cancers. Tumor cells will be removed due to the immune response by $g_1$.
\subsection{Modeling Lymphocytes(Immune cells)}
Lymphocytes, including T-cells, T-regulatory cells, and natural killer cells (NK), and their cytokine release patterns are implicated in both primary prevention and recurrence of breast cancer \cite{5}. In the third equation $s$ is the constant source rate of immune response, the presence of tumor cells stimulates and activate the immune response; resulting  in growth of immune cells, this is represented by a positive nonlinear growth term also called the Michaelis–Menten interaction term for immune cells $\dfrac{r IT}{o+T}$, where $r$ is the immune response rate, and $o$ is the immune threshold rate, $g_2$ represents the inactivation of immune cells due tumour cells as the interaction coefficient, $m$ is the natural death rate of immune cell, the next term is the limited rate suppression of $I$ due to excess estrogen, where $l_3$ is the suppression rate, and $g$ is the estrogen threshold rate, the last term represente a Michaelis-Menten interaction, it's the activation of immune cells by immunotherapy.
\subsection{Modeling estrogen}
The last equation represents the estrogen dynamics, this hormone plays various roles in the reproductive system of a female, many Studies show higher blood levels of the estrogen called estradiol increase the risk of breast cancer in postmenopausal women \cite{3}, here $p$ is the source rate of estrogen, and $\theta$ is the decay rate after being washed out from the body.
\subsection{Immunotherapy}
We differ our work from previous models such as Oke et al. by adding this equation gonverning the amount of the drug injected per day per litre of body volume, its turnover represented by $n_MM$, and finally a Michaelis–Menten term in the drug used $\frac{\chi M(t)I(t)}{\xi+I(t)}$, representing the production of immunotherapy from activated immune cells ($CD8^{+}T$).

\section{Model analysis}
%%%%%%%%%%%%EQUILIBRIA%%%%%%%%%%%%%%%%%%%%%%%%%%%%%%%%%%%%%%%%%%%%%%%
\subsection{Positiveness and existence of equilibria: }
\begin{theorem}
System~\eqref{equ} has one unique solution $(N,T,I,E,M)$ in $\R_{+}^5$.
\end{theorem}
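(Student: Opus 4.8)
The plan is to combine the classical Cauchy--Lipschitz (Picard--Lindel\"of) theorem for local existence and uniqueness with an invariance argument for the positive orthant, and then to upgrade the local solution to a global one by means of a priori bounds.

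First I would write the system compactly as $\dot{X}=f(t,X)$ with $X=(N,T,I,E,M)$ and $f$ the right-hand side of~\eqref{equ}, the $t$-dependence entering only through the injection rate $v_M(t)$, which we assume continuous and non-negative. The key observation is that on $\R_{+}^{5}$ every denominator appearing in $f$, namely $1+\epsilon T$, $o+T$, $g+E$, $j_M+M$ and $\xi+I$, is bounded below by a positive constant, so $f$ is of class $C^{1}$ on an open neighbourhood of $\R_{+}^{5}$ and is therefore locally Lipschitz in $X$. Applying the Cauchy--Lipschitz theorem yields, for each initial datum in $\R_{+}^{5}$, a unique maximal solution on some interval $[0,t_{\max})$.

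Next I would establish forward invariance of $\R_{+}^{5}$, i.e.\ positivity. For $N$ the equation factors as $\dot{N}=N\,\Phi(X)$ with $\Phi$ bounded on bounded sets, so the integrating-factor representation $N(t)=N(0)\exp\!\big(\int_{0}^{t}\Phi\,ds\big)$ shows $N$ keeps the sign of $N(0)$. For the remaining variables, whose equations carry a source term that does not factor through the variable itself, I would use the boundary (tangency) criterion: on each face $\{x_i=0\}$ the corresponding component of $f$ is non-negative, since $\dot{T}|_{T=0}=l_1NE(1-k)\ge 0$, $\dot{I}|_{I=0}=s>0$, $\dot{E}|_{E=0}=p(1-k)\ge 0$ and $\dot{M}|_{M=0}=v_M(t)\ge 0$. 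Hence no trajectory starting in $\R_{+}^{5}$ can cross these faces, and the orthant is invariant.

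Finally, to guarantee the solution is global I would derive uniform a priori bounds. The estrogen equation is linear and decoupled, giving $0\le E(t)\le\max\{E(0),\,p(1-k)/\theta\}$; the normal-cell equation satisfies $\dot{N}\le N(a_1-b_1N)$, so $N$ is bounded by a logistic comparison, and analogous comparison estimates bound $T$, $I$ and $M$. Since the solution then remains in a fixed compact subset of $\R_{+}^{5}$, it cannot blow up in finite time, so $t_{\max}=+\infty$. The main obstacle is the positivity step for $T$, $I$ and $M$: because of the source and source-like terms these variables do not admit a clean multiplicative factorisation, so one must argue via the tangency condition together with continuity of the flow, and one must simultaneously confirm that the denominators stay bounded away from zero along the trajectory so that the vector field remains well defined and the Lipschitz estimates persist throughout.
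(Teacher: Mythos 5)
Your argument is correct in substance, but it takes a genuinely different route from the paper: the paper's entire proof of this theorem is the citation ``See \cite{murray}'', with the positivity and boundedness work deferred to its next theorem. Your route --- observing that every denominator $1+\epsilon T$, $o+T$, $g+E$, $j_M+M$, $\xi+I$ is bounded below on $\R_{+}^{5}$ so the vector field is $C^{1}$ and locally Lipschitz there, invoking Picard--Lindel\"of for local existence and uniqueness, proving forward invariance by the tangency (quasi-positivity) criterion on the boundary faces, and extending to a global solution via a priori estimates --- is the standard self-contained argument that the paper outsources. Your tangency check for $T,I,E,M$ is in fact more robust than the paper's later device of writing each component as its initial value times the exponential of an integral, since that representation divides the source terms $s$, $p(1-k)$, $v_M$ by the variable itself and degenerates on the boundary. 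One caveat: your claim that the trajectory remains in a fixed compact set is stronger than what your comparison estimates actually deliver for $I$ and $M$. The terms $rIT/(o+T)$, $p_M IM/(j_M+M)$ and $\chi MI/(\xi+I)$ saturate in the \emph{other} variable but are linear in $I$ (respectively $M$), so the best direct bounds are of the form $\dot I\le s+(r+p_M)I$ and $\dot M\le \sup_t v_M(t)+\chi M$, i.e.\ at most exponential growth, not uniform boundedness, unless one additionally assumes $r+p_M<m$ and $\chi<n_M$. This does not damage the conclusion --- an affine bound $|f(X)|\le a+b|X|$ on the invariant orthant already rules out finite-time blow-up and gives $t_{\max}=+\infty$ --- but you should phrase the global-existence step in terms of that linear growth bound rather than compactness; note that the paper's own unproved assertion that $I$ and $M$ ``are also bounded'' suffers from the same gap.
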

\begin{proof}
See \cite{murray}.
\end{proof}
\begin{rem}
The aim of our model is to investigate cellular populations, all variables and parameters are non-negative. Based on the biological findings, the system of Equation~\eqref{equ} will be evaluated in $\Omega$ discused in the next theorem, that guarantees that the system is well-posed in such a way that solutions with non-negative initial conditions persist non-negative for all $t>0$, making the variables biologically meaningful. As a result, we get the theorem below.
\end{rem}

\begin{theorem}
The following set, 

$$\Omega=\left\{(N, T, I, E, M) \in \R_{+}^5\right\},$$
is a positively invariant set of model~\eqref{equ}.
\end{theorem}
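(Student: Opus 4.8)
The plan is to establish positive invariance of $\Omega=\R_+^5$ via the standard boundary (Nagumo) tangency condition: I will show that whenever one coordinate vanishes while the remaining coordinates stay non-negative, the time derivative of that coordinate is non-negative, so the flow never points strictly out of $\Omega$. Combined with the local existence and uniqueness granted by the existence and uniqueness theorem above, this yields that a solution issued from $\R_+^5$ remains in $\R_+^5$ for all $t>0$. Throughout I use the sign conventions recorded in the Remark, namely that every parameter is non-negative, that the diet factor obeys $k\le 1$ so that $1-k\ge 0$, and that the injected-drug rate satisfies $v_M(t)\ge 0$.

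First I would read off the vector field on each of the five bounding faces. On $\{N=0\}$ every term of $dN/dt$ carries a factor $N$, hence $dN/dt=0$ there. On $\{T=0\}$ the only surviving term is $dT/dt=l_1 N E(1-k)\ge 0$. On $\{I=0\}$ one is left with $dI/dt=s\ge 0$, since every other term contains a factor $I$. On $\{E=0\}$ the equation reduces to $dE/dt=p(1-k)\ge 0$. Finally on $\{M=0\}$ one obtains $dM/dt=v_M(t)\ge 0$. In each case the relevant component of the field points into $\Omega$ or is tangent to its boundary, which is exactly the inward condition required.

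To convert these pointwise boundary signs into a genuine invariance statement I would run a first-exit-time (minimality) argument, which is where the real work lies. Suppose a solution starting in $\R_+^5$ ever leaves it, and let $t_0>0$ be the infimum of times at which some coordinate turns negative. By continuity of the solution (guaranteed on the maximal interval from the existence theorem) the offending coordinate equals zero at $t_0$ while all the others are still non-negative, so the state at $t_0$ lies on the corresponding face and the computation above gives $\dot x_i(t_0)\ge 0$, contradicting that $x_i$ is about to become negative. Equivalently, each equation has the form $\dot x_i=x_i\,g_i+h_i$ with source $h_i\ge 0$ on $\Omega$, so the integrating-factor representation $x_i(t)=\mu_i(t)^{-1}\big(x_i(0)+\int_0^t\mu_i h_i\,ds\big)$ is manifestly non-negative. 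The main obstacle is precisely the coupling between equations: the non-negativity of each source $h_i$ (for instance $l_1 N E(1-k)$ in the $T$-equation) is only assured while the other variables remain non-negative, so the equations cannot be treated in isolation; the first-exit-time argument is what simultaneously controls all five coordinates and closes this gap.
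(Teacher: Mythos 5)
Your proof is correct, but it follows a genuinely different route from the paper's. The paper writes each component in the factored form $x_i(t)=x_i(0)\exp\bigl(\int_0^t(\cdots)\,d\tau\bigr)$, absorbing the additive source terms ($s$, $p(1-k)$, $v_M$, $l_1NE(1-k)$) into the exponent by dividing them by the corresponding variable (e.g.\ the term $\tfrac{s}{I(\tau)}$ in the exponent of $I$), and concludes positivity from the positivity of the exponential; it then appends comparison-theorem bounds $\limsup_{t\to\infty}N\le a_1/b_1$ and $\limsup_{t\to\infty}T\le a_2/b_2$. You instead verify the inward-pointing (Nagumo) condition on each coordinate face of $\R_+^5$ and close the argument with a first-exit-time/integrating-factor step. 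Your version is arguably the more careful one: the paper's exponential representation is only formally valid while every variable stays strictly positive (terms like $s/I(\tau)$ degenerate when a component vanishes), whereas your decomposition $\dot x_i=x_ig_i+h_i$ with $h_i\ge 0$ handles the boundary cleanly, and you correctly identify the coupling of the sign conditions across equations as the point that forces a simultaneous (first-exit-time) treatment rather than a component-by-component one. Two small remarks: the bare derivative-sign contradiction at $t_0$ is inconclusive when $\dot x_i(t_0)=0$, so it is really the integrating-factor identity (or an explicit appeal to Nagumo's viability theorem) that carries the argument — you do supply this, but it should be the primary step rather than the "equivalently" aside; and you do not reprove the $\limsup$ bounds the paper tacks on, which is fine since the stated $\Omega$ is the full non-negative orthant and boundedness is not part of the claim.
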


\begin{proof}
Let $P(t)=(N(t),T(t),I(t),E(t),M(t))$ be any positive solution of the model, with initial condition $P(0)=(N(0),T(0),I(0),E(0),M(0)) \in \R_{+}^{5},$ from the model~\eqref{equ} we get:
$$
\begin{aligned}
&N(t)= N(0)  e^{\int_{0}^{t}a_1-b_1N(\tau)-\frac{d_1T(\tau)}{1+\epsilon T(\tau)}-l_1E(\tau)(1-k) \mathrm{d} \tau},\\
&T(t)=T(0)  e^{\int_{0}^{t}a_2d-b_2T(\tau)-g_1I(\tau)-m_d+\frac{l_1N(\tau)E(\tau)}{T(\tau)(1-k)} \mathrm{d} \tau}, \\
&I(t)=I(0)  e^{\int_{0}^{t}\frac{s}{I(\tau)}+\frac{rT(\tau)}{o+T(\tau)}-g_2T(\tau)-m - \frac{l_3E(\tau)}{g+E(\tau)}(1-k)+\frac{p_M M(\tau)}{j_M+M(\tau)} \mathrm{d} \tau},\\
&E(t)=E(0)  e^{\int_{0}^{t} \frac{p(1-k)}{E(\tau)}-\theta \mathrm{d} \tau}, \\ 
&M(t)=M(0)  e^{\int_{0}^{t} \frac{v_M}{M(\tau)}-n_M+\frac{\chi I(\tau)}{\xi+I(\tau)}\mathrm{d} \tau}.
\end{aligned}
$$
By starting from a positive initial condition each solution of the model remain positive for all $t \geq 0$, thus $\Omega$ is a positively invariant set of our model.\par
From the first and second equation of $N(t)$ and $T(t)$, and by using the comparison theorem we obtain:
$$
\frac{\mathrm{d} N(t)}{\mathrm{d} t} \leq N(t)(a_1-b_1N(t)),$$
Thus $$\limsup _{t \rightarrow+\infty} N(t) \leq \frac{a_1}{b_1} .
$$
and,
$$\limsup _{t \rightarrow+\infty} T(t) \leq \frac{a_2}{b_2} .
$$

Consequently, it can be shown that $I, E, M$ are also bounded, which completes the proof.
\end{proof}

\subsection{Equilibria}

In this section we study the existence and stability of the equilibria, which also represents the critical points of system~\eqref{equ}. The model system admits seven steady states in which there are one tumor-free equilibrium point, five dead equilibria, and finally the coexisting equilibria discussed in a brief way.

Let $\bar{P}=(\bar{N},\bar{T},\bar{I},\bar{E},\bar{M})$ be an equilibrium point, We have $\bar{N}$, $\bar{T}$, $\bar{I}$, $\bar{E}$ and $\bar{M}$ are positive variables since cell populations are non-negative and real. Therefore, all parameters considered in this model are positive.
\subsubsection{Tumor free equilibrium point}
$$P_{0}=\left(\bar{N}_{0}, 0, \bar{I}_{0}, \bar{E}_{0}, \bar{M}_{0}\right),$$
where,
\begin{itemize}
\item $\bar{N}_{0}=\dfrac{a_{1}-l_{1} \bar{E}_{0}(1-k)}{b_{1}},$
\item$\bar{I}_{0}=\dfrac{s}{m_{+} \dfrac{l_{3} \bar{E}_{0}}{g+\bar{E}_{0}}(1-k)-\dfrac{p_{M} \bar{M}_{0}}{j+\bar{M}_{0}}},$
\item $\bar{E}_{0}=\dfrac{(1-k) p}{\theta},$
\item $\bar{M}_{0}=\dfrac{v_{M}}{n_{M}-\dfrac{\chi \overline{I_{0}}}{\xi_{+} \overline{I_{0}}}}.$
\end{itemize}
\begin{rem}
$CD8^{+}T$ cells also called cytotoxic T lymphocytes, are a type of immune cells that have the  capacity to react to pathogens like infections, virus and cancer \cite{CD8}.\\
If we consider $CD8^{+}T$ as the only type of immune cells for our model, then during the tumor-free state, $I(t)$ will be zero for all $t$, since the activation of these effector cells depends on tumor.
\end{rem}
\begin{lemma}

$P_{0}$ exists if and only if :
\begin{enumerate}[label=\roman*)]
    \item $\bar{E}_{0}\leq\dfrac{a_1}{l_1(1-k)}, with\quad k<1,$ \\
    \item $\bar{M}_{0}\leq\dfrac{m}{p_{M}},$ \\
    \item $\bar{I}_{0}\leq\dfrac{n_{M} \xi}{\chi-n_{M}},$\\
    \item $l_1\leq\dfrac{\theta a}{p(1-k)^{2}}.$
\end{enumerate}
\end{lemma}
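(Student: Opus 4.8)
The plan is to locate $P_0$ as the stationary point of system~\eqref{equ} carrying $T=0$, so I would first substitute $\bar{T}=0$ into the five right-hand sides and set each to zero. Two of the resulting equations decouple immediately: the $E$-equation gives $\theta\bar{E}_0=p(1-k)$, hence $\bar{E}_0=\frac{(1-k)p}{\theta}$, which is automatically positive once $k<1$; and the $N$-equation factors as $\bar{N}_0\bigl(a_1-b_1\bar{N}_0-l_1\bar{E}_0(1-k)\bigr)=0$, whose nontrivial root is the displayed $\bar{N}_0$. Membership $P_0\in\R_+^5$ then forces $\bar{N}_0\ge 0$, i.e. $a_1-l_1\bar{E}_0(1-k)\ge 0$, which is exactly condition (i). Substituting the closed form of $\bar{E}_0$ into (i) and clearing denominators turns it into $l_1 p(1-k)^2\le\theta a_1$, i.e. condition (iv); thus (iv) is simply (i) made explicit through $\bar{E}_0$, and I would present it as such rather than as an independent requirement.

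The delicate part is the pair $(\bar{I}_0,\bar{M}_0)$, because the $I$- and $M$-equations do not decouple: each displayed formula already contains the other unknown. Setting the $M$-equation to zero gives $\bar{M}_0\bigl(n_M-\frac{\chi\bar{I}_0}{\xi+\bar{I}_0}\bigr)=v_M$, so with $v_M>0$ the sign of $\bar{M}_0$ is governed by the factor $n_M-\frac{\chi\bar{I}_0}{\xi+\bar{I}_0}$; requiring it to be positive and solving the linear inequality $(\chi-n_M)\bar{I}_0<n_M\xi$ (with $\chi>n_M$) yields condition (iii). Likewise, setting the $I$-equation to zero gives $\bar{I}_0\,D=s$ with $D=m+\frac{l_3\bar{E}_0}{g+\bar{E}_0}(1-k)-\frac{p_M\bar{M}_0}{j_M+\bar{M}_0}$, so $\bar{I}_0\ge 0$ is equivalent to $D>0$; since the estrogen suppression term is nonnegative, I would bound the Michaelis--Menten immunotherapy fraction by $\bar{M}_0$ and impose $p_M\bar{M}_0\le m$, which is condition (ii).

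The main obstacle is making the coupled balance for $(\bar{I}_0,\bar{M}_0)$ genuinely consistent rather than circular: the two closed forms are simultaneous equations, so I would substitute $\bar{M}_0(\bar{I}_0)$ read off from the $M$-balance into the $I$-balance to obtain a single scalar equation $F(\bar{I}_0)=0$, and then argue---by monotonicity of $F$ on the admissible interval $0<\bar{I}_0<\frac{n_M\xi}{\chi-n_M}$ together with a sign change at its endpoints---that a unique positive root exists exactly when the parameter inequalities hold. This monotonicity/fixed-point step, not the surrounding algebra, is where the real care is needed. Finally, because each of (i)--(iv) was obtained as the precise nonnegativity condition for the corresponding coordinate (or its explicit restatement via $\bar{E}_0$), their conjunction is both necessary and sufficient for $P_0\in\R_+^5$, which establishes the asserted equivalence and completes the argument.
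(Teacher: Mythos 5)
The paper states this lemma without any proof, so there is no argument of record to compare against; your reconstruction --- substitute $\bar{T}=0$, read off each coordinate of $P_0$ in closed form, and impose nonnegativity coordinate by coordinate --- is clearly the argument the authors intend, and your observation that (iv) is just (i) with $\bar{E}_0=\frac{(1-k)p}{\theta}$ substituted (so it is not an independent condition) is correct and worth making explicit.

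Two genuine gaps remain, however. First, the ``if and only if'' does not survive your own derivation of condition (ii): the sharp requirement for $\bar{I}_0>0$ is $\frac{p_M\bar{M}_0}{j_M+\bar{M}_0}<m+\frac{l_3\bar{E}_0}{g+\bar{E}_0}(1-k)$, and $p_M\bar{M}_0\le m$ is only a sufficient bound for it (obtained by discarding the estrogen term and replacing the saturated fraction $\frac{\bar{M}_0}{j_M+\bar{M}_0}$ by $\bar{M}_0$, which itself needs $j_M\ge1$). So (ii) as stated is sufficient but not necessary, and the equivalence you assert in the last sentence is not established. Second, you correctly identify that the $I$- and $M$-balances are simultaneous equations --- the displayed formulas for $\bar{I}_0$ and $\bar{M}_0$ each contain the other unknown, so their mere nonnegativity does not yet prove that a consistent pair exists --- but you only promise the monotonicity/sign-change argument for the reduced scalar equation $F(\bar{I}_0)=0$ on $\bigl(0,\frac{n_M\xi}{\chi-n_M}\bigr)$ without carrying it out. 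That fixed-point step is precisely where the content of an existence proof lies (and where the paper is silent), so as written the proposal names the hard part but does not close it.
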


Biologically, it means that the growth rate of normal cells must be more than the estrogen amount, also we notice that the $\bar{I}_0$ in this case depends on the suppression of estrogen and the amount of immunotherapy dose, unlike the the estrogen-free model studied by T. Sundaresan et al. \cite{2}, where it depends only on the nature of the dynamics.

\subsubsection{Death equilibrium points type 1}~\par

\vskip1mm
Dead equilibria are states when both normal cells and
tumor cells have died off, maybe by a mastectomy surgery \cite{surg}, or death, and it's given by:
$$
P_{di}=\left(0,0, \bar{I}_{i}, \frac{(1-k) p}{\theta}, \bar{M}_{i}\right) ,
$$
such that,
$$\bar{M_{i}}=\dfrac{v_{M}}{n_{M}-\dfrac{\chi \bar{I_{i}}}{\xi_{i}+ \bar{I_{i}}}} \text { for  } i\in\{1,2\},$$
and $\bar{I}_{i}$ verifies this equality:
\begin{equation}
\bar{I}^{2}\left(A \chi+p_{M} v_{M}\right)-\bar{I}\left(A n_{M}-p_{M} v_{M} \xi+s \chi \right)+s n_{M}=0,
\label{equI}
\end{equation}
such that,
$$A=m+\frac{l_{3} \bar{E}}{g+\bar{E}}(1-k),$$
therefore, the discriminant of (2) is:
\begin{equation}
\begin{aligned}
&\Delta=v_{M}(t)^{2} \chi^{2} p_{M}^{2}-2 A v_{M}(t) \chi n_{M} p_{M}-2 v_{M}(t) \chi s \xi p_{M}\left(A n_{M}\right)^{2}\\
&-2 A s \xi n_{M}-4 v_{M}(t) s n_{M} p_{M}+s^{2} \xi^{2},
\end{aligned}
\label{delta}
\end{equation}
we set $B=A\chi+p_{M} v_{M},$\\
 $B\geq0 \Rightarrow \Delta\geq0$, which means that equation (\ref{equI}) has two distinct real roots:
\begin{equation}
\bar{I}_{1,2}=\dfrac{-p_{M} v_{M}(t) \chi+A n_{M}+s \xi_{\mp} \sqrt{\Delta}}{2\left(A \xi+p_{M} v_{M}(t)\right)}.
\label{rootsI}
\end{equation}

We summerize then the existence of the two dead equilibrium points in the following lemma,
\begin{lemma}
$P_{di}$ for $i=1,2.$ exists if and only if :
\begin{enumerate}[label=\roman*)]
    \item $\dfrac{p_{M} v_{M} \xi}{s \chi A n_{M}}<1,$
    \item $k<1,$ 
    \item $ n_M\geq \dfrac{\chi\bar{I}_{i}}{\xi+\bar{I}_{i}}.$
\end{enumerate}
\end{lemma}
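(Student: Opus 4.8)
The plan is to read \emph{existence} of $P_{di}$ as the requirement that all five coordinates be real and biologically admissible, i.e. non-negative, and then to show that each of the three listed conditions is precisely what forces non-negativity of one of the non-trivial coordinates. Since the first two coordinates vanish identically ($\bar N=\bar T=0$) and the fourth is the fixed value $\bar E=(1-k)p/\theta$, only $\bar E$, the roots $\bar I_i$ of the quadratic~\eqref{equI}, and the associated $\bar M_i$ remain to be controlled. I would treat the three in that order and then check that every implication is reversible, so that the ``if and only if'' holds in both directions.

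First I would dispose of the easy coordinate: with $p,\theta>0$, non-negativity of $\bar E=(1-k)p/\theta$ is equivalent to $1-k\ge 0$, and the strict form $k<1$ (condition~(ii)) gives a genuinely positive estrogen level while also keeping $A=m+\frac{l_3\bar E}{g+\bar E}(1-k)>0$, a fact I use next. For the immune coordinate I would apply Vieta's formulas to~\eqref{equI}. With leading coefficient $B=A\chi+p_Mv_M>0$, the product of the two roots is $\bar I_1\bar I_2=sn_M/B>0$ automatically, since $s,n_M>0$; hence the roots share a common sign, and both are non-negative exactly when their sum is non-negative. The sum equals $(An_M+s\chi-p_Mv_M\xi)/B$, so admissibility of the roots is equivalent to $p_Mv_M\xi\le An_M+s\chi$, which is condition~(i). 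Reality of the roots is not an extra hypothesis here, since $\Delta\ge 0$ was already secured by $B\ge 0$ just before~\eqref{rootsI}.

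To finish the forward direction I would handle $\bar M_i$: given an admissible root $\bar I_i\ge 0$, the drug level $\bar M_i=v_M/\bigl(n_M-\tfrac{\chi\bar I_i}{\xi+\bar I_i}\bigr)$ has positive numerator, so $\bar M_i\ge 0$ forces the denominator to be positive, which is exactly condition~(iii). For the converse I would run the three equivalences backwards: conditions (ii), (i) and (iii) produce, respectively, a non-negative $\bar E$, a pair of non-negative real roots $\bar I_i$, and a non-negative $\bar M_i$, so that $P_{di}\in\R_{+}^5$ and the equilibrium exists.

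The delicate step is the root analysis for~\eqref{equI}: positivity of the \emph{product} of roots must be combined with the sign of the \emph{sum} to conclude that both $\bar I_1,\bar I_2$ are non-negative, and one must ensure condition~(iii) is imposed for the particular root $\bar I_i$ feeding into $\bar M_i$ rather than for an arbitrary value. I also expect to flag a typographical point: the denominator in~(i) should be the sum $s\chi+An_M$ coming from Vieta rather than the product printed in the statement. The remaining algebra — expanding $dI/dt=0$ and $dM/dt=0$ at $N=T=0$ to recover~\eqref{equI} and the formula for $\bar M_i$ — is routine, and I would not belabour it.
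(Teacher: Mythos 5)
The paper states this lemma without any proof at all --- it is followed only by a one-sentence biological interpretation --- so there is no ``paper's own proof'' to compare against. Your reconstruction (existence $=$ non-negativity of the non-trivial coordinates, handled one coordinate at a time: $k<1$ for $\bar E$, Vieta's formulas on \eqref{equI} for the $\bar I_i$, and positivity of the denominator of $\bar M_i$ for condition~(iii)) is the natural argument and is essentially sound. Your observation that the product of the roots of \eqref{equI} is $sn_M/B>0$ automatically, so that positivity of both roots reduces to positivity of their sum $(An_M+s\chi-p_Mv_M\xi)/B$, is the right key step, and your flag that the denominator in condition~(i) should be the sum $s\chi+An_M$ rather than the printed product $s\chi A n_M$ is almost certainly a correct diagnosis of a typo in the paper.

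Two caveats you should make explicit if you write this up. First, you dismiss reality of the roots by citing the paper's assertion that $B\ge 0$ implies $\Delta\ge 0$; that assertion is not justified in the paper (the displayed expression \eqref{delta} for $\Delta$ is garbled, and the true discriminant $(An_M-p_Mv_M\xi+s\chi)^2-4Bsn_M$ is not non-negative merely because $B\ge 0$), so a rigorous version of the lemma needs $\Delta\ge 0$ as an additional hypothesis or a separate verification. Second, condition~(iii) as stated permits equality $n_M=\chi\bar I_i/(\xi+\bar I_i)$, which makes $\bar M_i$ undefined rather than non-negative; the inequality must be strict for your denominator argument to close.
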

Biologically, it means that the existence of these equilibria depends on the natural elimination or excretion of IL-2, which must be more than its production from the activated $CD8^{+}T$.

\subsubsection{Death equilibrium points type 2}

$$
P_{di}=\left(0, \bar{T}_{i}, \frac{a_2 d-b_{2} \bar{T}_{i}-m_{d}}{g_{1}}, \frac{(1-k)p}{\theta}, \frac{v_{M}}{n_{M}-\frac{\chi \bar{I}_{i}}{\xi+\bar{I}_i}}\right),
$$
for $i={3,4,5}.$

To find the expressions of $\bar{T_i}$, we solve the above quadratic equation:
\begin{equation}
\begin{aligned}
\nu(\bar{T})=&\bar{T}^3\left(b_{2} g_{2}\right.)+\bar{T}^{2}(b_{2} m+b_{2} g_{2}o-b_{2} r+b_{2}C\\
&-a_{2} d g_{2}-a_{2} d C +m_d g_{2})+\bar{T}(b_{2} m o+b_{2} C o\\
&-a_{2} d m-a_{2} d g_{2} o+r a_2d-a_{2}do C+g_{1} s\\
&+m_{d} m+m_{d} g_{2} \theta-r m_{d}+m_{d} C) \\
&+m_{d} C o+m_{d} m o+g_{1} s o-a_2d m o,
\end{aligned}
\label{Tequation}
\end{equation}

where,
$$
C=\frac{l_{3} \bar{E}}{g+\bar{E}}(1-k)-\frac{p_{M} \bar{M}}{j+\bar{M}},
$$
thus,
$$\bar{T}_i=Roots (\nu(\bar{T}))\quad \text{for}\quad i=3,4,5.$$
We finally can summarize the expression of the death equilibrium points of type 2 as follows:
$$
P_{d3, d4, d5}=\left(0, \bar{T}_{3,4,5}, \bar{I}_{3,4,5} ,\frac{(1-k) p}{\theta}, \bar{M}_{3,4,5}\right).
$$
\begin{lemma}
The dead equilibria of type 2 exist if:
\begin{enumerate}[label=\roman*)]
    \item $k<1,$
    \item $ \dfrac{a_{2} d-m_{d}}{b_{2}}-\dfrac{g_{1}^{2} n_{M}}{b_{2}\left(\chi-g_{1}n_M\right)} \leqslant \bar{T}_{3,4,5} \leqslant \dfrac{a_{2} d-m_{d}}{b_{2}}.$
\end{enumerate}
\end{lemma}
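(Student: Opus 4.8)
The plan is to read ``existence'' as the requirement that the candidate point belong to $\R_+^5$, i.e. that every coordinate be non-negative and that each denominator entering its definition be strictly positive. Two coordinates are inert (the first is $0$ and $\bar{E}=(1-k)p/\theta$ is fixed by the estrogen and diet parameters), while $\bar{T}_{3,4,5}$ is frozen once we pick a real root of the cubic $\nu$ in~\eqref{Tequation}, and the remaining coordinates $\bar{I}_{3,4,5}$ and $\bar{M}_{3,4,5}$ are determined by $\bar{T}$. Hence the task reduces to extracting, equation by equation, the sign conditions that make each coordinate admissible and then rephrasing them as bounds on $\bar{T}$.

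Condition (i) I would read straight off the estrogen equation: $dE/dt=0$ gives $\bar{E}=(1-k)p/\theta$, which is positive precisely when $k<1$ since $p,\theta>0$. For the upper bound in (ii) I set $dT/dt=0$ with $N=0$ and $T\neq0$ (the defining feature of type-2 points), factor out $\bar{T}$, and obtain $a_2 d-b_2\bar{T}-g_1\bar{I}-m_d=0$, hence $\bar{I}=(a_2 d-b_2\bar{T}-m_d)/g_1$. Imposing $\bar{I}\geq0$ and using $g_1>0$ gives at once $\bar{T}\leq(a_2 d-m_d)/b_2$.

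The lower bound in (ii) is where the real work lies, and it must come from the positivity of $\bar{M}_{3,4,5}=v_M/\big(n_M-\chi\bar{I}/(\xi+\bar{I})\big)$. Since $v_M>0$, admissibility forces the turnover denominator to be strictly positive, i.e. $n_M(\xi+\bar{I})>\chi\bar{I}$. I would clear this Michaelis--Menten denominator, solve the resulting linear inequality for $\bar{I}$ to obtain an upper cap on $\bar{I}$, and then substitute $\bar{I}=(a_2 d-b_2\bar{T}-m_d)/g_1$; because $\bar{I}$ decreases in $\bar{T}$, a cap on $\bar{I}$ converts into a lower bound on $\bar{T}$, which should be the left-hand inequality in (ii).

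The main obstacle is the sign bookkeeping in this last step. Clearing the denominator gives $n_M\xi>(\chi-n_M)\bar{I}$, so the orientation of the bound on $\bar{I}$ flips according to whether $\chi$ exceeds $n_M$; reconciling this naive threshold with the coefficient $\chi-g_1 n_M$ displayed in the statement, and pinning down the exact constant $g_1^2 n_M/\big(b_2(\chi-g_1 n_M)\big)$ after the substitution, is exactly the delicate computation I would have to carry out. I would therefore split into the regime where this governing coefficient is positive — so the denominator can vanish, the cap on $\bar{I}$ is active, and the lower endpoint of the interval is genuinely binding — versus the complementary regime where $\bar{M}$ is automatically positive and the lower constraint is vacuous; the displayed interval corresponds to the former, biologically relevant case where immune-driven drug production dominates natural clearance. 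The one point left open by (i)--(ii) is that they merely certify that a real root of $\nu$ \emph{lying} in this interval yields a legitimate equilibrium; to guarantee such a root exists I would evaluate $\nu$ at the two endpoints and invoke the intermediate value theorem, after which $\bar{M}_{3,4,5}\geq0$ follows for free once $\bar{I}_{3,4,5}$ sits below the cap.
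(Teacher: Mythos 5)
The paper states this lemma without any proof, so there is no official argument to measure you against; your coordinate-by-coordinate positivity strategy is clearly the intended one. Reading condition (i) off $\bar{E}=(1-k)p/\theta>0$, and the upper bound in (ii) off $\bar{I}_{3,4,5}=(a_2d-b_2\bar{T}-m_d)/g_1\geq 0$, is correct and complete as far as it goes.

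The gap is in the lower bound, which you explicitly defer as ``the delicate computation I would have to carry out'' --- but that computation \emph{is} the content of condition (ii), and carrying it out along the route you describe does not land on the stated constant. Positivity of $\bar{M}=v_M/\bigl(n_M-\chi\bar{I}/(\xi+\bar{I})\bigr)$ forces $n_M\xi>(\chi-n_M)\bar{I}$, hence in the regime $\chi>n_M$ the cap $\bar{I}<n_M\xi/(\chi-n_M)$; substituting $\bar{I}=(a_2d-b_2\bar{T}-m_d)/g_1$ gives
$$\bar{T}>\frac{a_2d-m_d}{b_2}-\frac{g_1 n_M\xi}{b_2(\chi-n_M)},$$
whereas the lemma displays $g_1^{2}n_M/\bigl(b_2(\chi-g_1 n_M)\bigr)$. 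These coincide only under accidental parameter relations (they differ for generic $\xi$), so a finished proof must either exhibit where the factor $g_1^{2}$ and the threshold $\chi-g_1n_M$ actually come from or conclude that the published constant is a misprint; leaving the sign bookkeeping ``to be carried out'' hides exactly the step on which the lemma stands or falls. Two smaller points: your final appeal to the intermediate value theorem proves more than the lemma claims (it only asserts admissibility conditions on a root of $\nu$, not that such a root exists) and would in any case require evaluating the signs of $\nu$ at the two endpoints, which you do not do; and the equation $\nu(\bar T)=0$ is a cubic, not a quadratic as the surrounding text of the paper suggests, so ``the'' root $\bar T_{3,4,5}$ must be treated as any of up to three real roots, each tested separately against the interval.
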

\vspace{1em}
\subsubsection{Co-existing point}~\par
In this case all cell populations survive the competition and they coexist, it is given by:
$$
P_e=\left(N_{e}, T_{e}, I_{e}, E_{e},M_{e}\right),
$$
such that,
\begin{itemize}
\item $N_e=\dfrac{1}{b_1}\left(a_1-\dfrac{d_1T_e}{1+\epsilon T_e}-l_1E_e(1-k)\right)=\psi_{1}(I_e),$

\item $T_e= Roots(\mu(T_e))=\psi_{2}(I_e),$

\item $I_e=\dfrac{s}{g_2T_e-\dfrac{r T_e}{o+T_e}+m+\dfrac{l_3E_e}{g+E_e}(1-k)- \dfrac{p_{M} M_e}{j+M_e}},$

\item $E_e=\dfrac{(1-k) p}{\theta},$
\item $M_e=\dfrac{v_{M}}{n_{M}-\dfrac{\chi I_e}{\xi+ I_e}}=\psi_{3}(I_e),$
\end{itemize}

where,
\begin{equation}
\mu(T_e)=b_2T_e^{2}+(g_1I_e+m_d-a_2d)T_e-l_1N_eE_e(1-k),
\label{6}
\end{equation}
we let,
\begin{align*}
a&=b_2,\\
b&=g_1I_e+m_d-a_2d,\\
c&=l_1N_e E_e(1-k),
\end{align*}

the existence of the coexisting point depends on the sign of (\ref{6}) roots, we discuss three cases using its coefficients, when $b$ and $c$ are positive we get two negative roots, means there exist no realistic equilibria in this case, therefore, for $c<0$ we  get one equilibrium point, and for $b$ either positive, or negative we obtain two roots with opposite signs, which means there exist at least one coexisting point, finally the case where $b<0$ and $c>0$ will allow us to get two coexisting points besides the other equilibria we found in our previous parts.

We summarize the existence of the coexisting point in the following lemma,
\begin{lemma}
$P_e$ exists if and only if:
$$I_{e} \in\left]0,\dfrac{a_2d-m_d}{g_1}\right[\text{and}\quad  k<1.$$
\end{lemma}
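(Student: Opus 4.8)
The plan is to regard the immune level $I_e$ as the single free parameter and to read off the existence of $P_e$ from the requirement that all five coordinates, obtained through the slaving relations $N_e=\psi_1(I_e)$, $T_e=\psi_2(I_e)$, $M_e=\psi_3(I_e)$ together with $E_e=(1-k)p/\theta$, be simultaneously positive and satisfy the closing equation for $I_e$. Thus I would translate the phrase ``$P_e$ is a biologically meaningful equilibrium'' into the five positivity constraints $N_e,T_e,I_e,E_e,M_e>0$ and extract from them exactly the two conditions in the statement.

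First I would dispose of the hormone coordinate. Since $p,\theta>0$, the identity $E_e=(1-k)p/\theta$ shows that $E_e>0$ if and only if $1-k>0$, i.e. $k<1$; this is therefore necessary and sufficient for a positive estrogen level, and it simultaneously guarantees that the factor $(1-k)$ is positive wherever it occurs below. In particular the constant term $c=l_1N_eE_e(1-k)$ of the quadratic \eqref{6} is then positive as soon as $N_e>0$.

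Next I would analyse the tumour coordinate through the quadratic $\mu(T_e)=aT_e^{2}+bT_e-c$ with $a=b_2>0$, $b=g_1I_e+m_d-a_2d$ and $c=l_1N_eE_e(1-k)>0$. By Vieta's relations the product of its roots equals $-c/a<0$, so $\mu$ possesses exactly one positive root, giving an admissible $T_e>0$. The role of the stated upper bound is then to force the linear coefficient to satisfy $b<0$, i.e. $g_1I_e+m_d-a_2d<0$, which is precisely $I_e<(a_2d-m_d)/g_1$; combined with the elementary $I_e>0$ this produces the interval $]0,(a_2d-m_d)/g_1[$. I would then establish sufficiency by checking that on this interval $\psi_1,\psi_2,\psi_3$ return positive values and that the self-consistency equation defining $I_e$ admits a solution there, while necessity follows by reversing each positivity implication.

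The main obstacle is that $\mu$ is not honestly a quadratic with frozen coefficients: its constant term $c$ depends on $N_e$, which through its explicit formula depends on $T_e$, hence again on $I_e$, while $I_e$ is in turn defined implicitly through a relation involving $T_e$ and $M_e$. The argument is therefore really a fixed-point/self-consistency argument in the single variable $I_e$, and the clean interval only emerges after substituting these dependencies and verifying the monotonicity and sign of each $\psi_i$. I would also have to confirm the implicit constraint $M_e>0$, namely $n_M-\chi I_e/(\xi+I_e)>0$, which bounds $I_e$ from above as well; reconciling this second upper bound with the stated one, and checking that it is compatible with — or subsumed by — the interval $]0,(a_2d-m_d)/g_1[$, is where I expect the delicate bookkeeping to lie.
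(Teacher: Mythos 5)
Your overall route --- reading $k<1$ off the positivity of $E_e=(1-k)p/\theta$ and extracting the bound on $I_e$ from the sign of the linear coefficient $b=g_1I_e+m_d-a_2d$ of the quadratic $\mu$ in \eqref{6} --- is the same one the paper takes: its justification of this lemma is nothing more than the case discussion on the signs of $b$ and $c$ that precedes the statement. But your own Vieta computation exposes a genuine gap in the necessity direction, and you do not close it. Since $\mu(T_e)=b_2T_e^{2}+bT_e-c$ has constant term $-c=-l_1N_eE_e(1-k)<0$ whenever $N_e>0$ and $k<1$, the product of the roots is negative and a unique positive root $T_e$ exists \emph{regardless of the sign of $b$}. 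So the argument you sketch shows that $k<1$ (together with $N_e>0$) already suffices to produce a positive $T_e$, and it gives no reason why $I_e<(a_2d-m_d)/g_1$ should be \emph{necessary} for the existence of $P_e$; the ``reversing each positivity implication'' step you defer would fail exactly here. (The paper has the same defect: its claim that $b>0,\ c>0$ yields ``two negative roots'' is inconsistent with the negative constant term, and it never actually derives the stated interval from the equilibrium equations.)

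The second gap is the one you flag yourself but leave open: positivity of $M_e=v_M/\bigl(n_M-\chi I_e/(\xi+I_e)\bigr)$ and positivity of the denominator in the expression for $I_e$ impose further upper bounds on $I_e$ that are neither shown to be implied by $I_e\in\left]0,(a_2d-m_d)/g_1\right[$ nor absorbed into the statement, and the self-consistency problem in $I_e$ --- that $N_e$, $T_e$ and $M_e$ all feed back into the scalar equation determining $I_e$ --- is never shown to admit a solution in the stated interval. A complete proof would have to run an intermediate-value or monotonicity argument for that scalar equation, or else honestly weaken the lemma to a list of sufficient conditions. As written, your proposal (like the paper) establishes neither direction of the claimed equivalence.
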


%%%%%%%%%%%%%%%%%%%%%%%%%%%%%%%%%%STABILITY%%%%%%%%%%%%%%%%%%%%%%%%%%%%%%%

\section{Stability analysis}
In this section we analyze the equilibria in terms of their local stability by
means of eigenvalues to identify conditions that can help eliminate tumor cells, We apply the Hartman Grobman theorem \cite{perko}.
Let $\bar{P}=(\bar{N},\bar{T},\bar{I},\bar{E},\bar{M})$ be an arbitrary equilibrium of model (\ref{equ}).

Hence, the associated characteristic equation of the jacobian matrix of our system at $P$is given by,
\begin{strip}
\begin{equation}
\left|\begin{array}{ccccc}{a_1-2 b_{1} \bar{N}-\dfrac{d_{1} \bar{T}}{1+e \bar{T}}-l_{1} \bar{E}(1-k)-\lambda } & {\dfrac{d_1 \bar{N}}{(1+e\bar{T})^2}} & {0} & {-l_1 (1-k)\bar{N}}& {0} \\\\

{l_1 (1-k)\bar{E}} & {a_{2} d-2 b_{2} \bar{T}-g_{1} \bar{I}-m_{d}-\lambda  } & {-g_1 \bar{T}} &{l_1 \bar{N} (1-k)} & {0}  \\\\

{0} & {{\dfrac{r o \bar{I}}{(o+\bar{T})^{2}}-g_{2} \bar{I}}} & {J_{33}-\lambda } & { -\dfrac{l_3\bar{I}g}{(g+\bar{E})^2}(1-k) } & {\dfrac{P_{\bar{M}} \bar{I} j_{\bar{M}}}{(j_{\bar{M}}+\bar{M})^2}}  \\\\

{0} & {0} & {0} & {-\theta-\lambda }& {0}\\\\

{0} & {0} & {\dfrac{\chi \bar{M} \xi}{(\xi+\bar{I})^{2}}} & {0}& {-n_{\bar{M}}+\dfrac{\chi \bar{I}}{\xi+\bar{I}}-\lambda } \end{array}\right|=0.
\label{CE}
\end{equation}
\end{strip}
where,\[J_{33}=\dfrac{r \bar{T}}{o+\bar{T}}-g_{2} \bar{T}-m-\dfrac{l_{3} \bar{E}}{g+\bar{E}}(1-k)+\dfrac{P_{\bar{M}}\bar{M}}{j_{\bar{M}}+\bar{M}}. \]
Now, we introduce reproductive numbers and conditions for the stability of tumor-free state, denote:
$$\mathscr{R}_{0}=\dfrac{A_{6} A_{9}}{A_{10} A_{5}},\quad \text{and}\quad \mathscr{R}_{1}=\dfrac{A_{1} A_{2}}{A_{0} A_{3}},$$
where,
\begin{align*}
A_0 &=a_1-2 b_{1}N-l_{1} E(1-k),\\
A_1 &=l_1 E(1-k),\\
A_2 &=d_1 N,\\
A_3 &=a_{2} d-g_{1}  I-m_{d},\\
A_4 &=\dfrac{r  I}{o}-g_{2}  I,\\
A_5 &=-m-\dfrac{l_{3}  E(1-k)}{g+ E}+\dfrac{P_{M}  M}{j_{M}+ M},\\
A_6 &=\dfrac{\chi  M \xi}{(\xi+ I)^{2}},\\
A_7 &=l_1 N(1-k),\\
A_8 &=\dfrac{l_{3}  I g}{g+ E}(1-k),\\
A_9 &=\dfrac{P_{M}  I j_{M}}{(j_{M}+M)^{2}},\\
A_{10}&=-n_M+\dfrac{\chi I}{\xi+I}.
\end{align*}
\begin{theorem}
The tumor-free equilibrium point $P_{0}$ of system (\ref{equ}) is locally asymptotically stable if $\mathscr{R}_{0}<1$ and $\mathscr{R}_{1}<1$, otherwise it is unstable.
\end{theorem}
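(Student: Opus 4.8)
The plan is to evaluate the characteristic equation~\eqref{CE} at the tumor-free point $P_0$ and exploit the decoupling that occurs when $\bar T=0$. First I would substitute $\bar T=0$, $\bar N=\bar N_0$, $\bar I=\bar I_0$, $\bar E=\bar E_0$, $\bar M=\bar M_0$ into the Jacobian. At $\bar T=0$ the entries $\frac{d_1\bar T}{1+\epsilon\bar T}$, $-g_1\bar T$, and the $\bar T$-dependent part of $J_{33}$ vanish, and the surviving matrix entries are exactly the quantities $A_0,\dots,A_{10}$ introduced before the statement. The resulting Jacobian contains one isolated equation: the fourth (estrogen) row is $(0,0,0,-\theta-\lambda,0)$, so I expand the determinant along that row to peel off the factor $-\theta-\lambda$, which already supplies the guaranteed negative eigenvalue $\lambda=-\theta$.

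Next I would argue that the remaining $4\times4$ minor is block lower-triangular. After deleting the estrogen row and column, the $(N,T)$ block couples to the $(I,M)$ block only through the lower-left corner, while the upper-right $2\times2$ block is identically zero. Hence the characteristic polynomial factors as
\begin{equation*}
(-\theta-\lambda)\,\bigl[(A_0-\lambda)(A_3-\lambda)-A_1A_2\bigr]\,\bigl[(A_5-\lambda)(A_{10}-\lambda)-A_6A_9\bigr]=0,
\end{equation*}
so the five eigenvalues split into $\lambda=-\theta$ together with the roots of the two quadratics $\lambda^2-(A_0+A_3)\lambda+(A_0A_3-A_1A_2)$ and $\lambda^2-(A_5+A_{10})\lambda+(A_5A_{10}-A_6A_9)$. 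To each quadratic I would apply the Routh--Hurwitz criterion: a real quadratic $\lambda^2-S\lambda+P$ has both roots in the left half-plane if and only if $S<0$ and $P>0$. The two constant terms are precisely the required determinant conditions, and rewriting them in ratio form gives $A_0A_3-A_1A_2>0\iff\mathscr R_1<1$ and $A_5A_{10}-A_6A_9>0\iff\mathscr R_0<1$, where the equivalences use that the diagonal products $A_0A_3$ and $A_5A_{10}$ are positive. Conversely, if $\mathscr R_0>1$ or $\mathscr R_1>1$ the corresponding constant term turns negative, that quadratic acquires two real roots of opposite sign, and $P_0$ becomes unstable, which is the ``otherwise'' clause.

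The step I expect to be the real obstacle is verifying the two trace conditions $A_0+A_3<0$ and $A_5+A_{10}<0$, together with the sign hypotheses that make the reproduction-number rewriting legitimate. Using $\bar N_0=(a_1-l_1\bar E_0(1-k))/b_1$ one checks directly that $A_0=-b_1\bar N_0<0$, and the positivity requirements on $\bar I_0$ and $\bar M_0$ in the existence lemma for $P_0$ force $A_5<0$ and $A_{10}<0$; the delicate point is the sign of $A_3=a_2d-g_1\bar I_0-m_d$, which the existence conditions do not pin down. I would therefore need to impose $A_3<0$ (equivalently $\bar I_0>(a_2d-m_d)/g_1$) so that $A_0A_3>0$ and the trace $A_0+A_3$ is negative; this is exactly what lets $\mathscr R_1$ act as a genuine threshold rather than spuriously dropping below $1$ by sign change. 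Once all four diagonal coefficients are negative, both traces are automatically negative, the two threshold conditions $\mathscr R_0<1$ and $\mathscr R_1<1$ deliver the determinant inequalities, and \emph{local asymptotic stability} of $P_0$ follows from the Hartman--Grobman theorem.
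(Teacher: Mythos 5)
Your proposal follows essentially the same route as the paper's own proof: peel off the eigenvalue $\lambda=-\theta$ from the estrogen row, factor the remaining $4\times4$ block into the $(N,T)$ and $(I,M)$ quadratics, and apply Routh--Hurwitz so that the constant terms become the threshold conditions $\mathscr{R}_1<1$ and $\mathscr{R}_0<1$ (your signs are in fact the internally consistent ones, since the paper's $A_5$ already absorbs the minus signs yet reappears as $-A_5$ in its Jacobian). The obstacle you flag at the end is real: the paper lists $\Delta_1=a_1>0$ among the Hurwitz conditions but never verifies the trace inequalities, so the additional sign hypotheses you impose (in particular $A_3<0$, i.e.\ $\bar I_0>(a_2d-m_d)/g_1$) are genuinely needed and are only tacitly assumed there.
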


\begin{proof}
Let $J_{p_0}$ be the jacobian matrix around $P_0$:
$$J_{p_0}=\left(\begin{array}{ccccc}
A_{0} & A_{2} & 0 & -A_{7} & 0 \\
A_{1} & A_{3} & 0 & A_{7} & 0 \\
0 & A_{4} & -A_{5} & -A_{8} & A_{9} \\
0 & 0 & 0 & -\theta & 0 \\
0 & 0 & A_{6} & 0 & A_{10}
\end{array}\right)$$
and the characteristic equation at $P_0$ is given by:\\
\begin{multline*}
\mathbb{P}(\lambda)=(\lambda+\theta)\left(\lambda^{2}-A_{10} \lambda+A_{5} \lambda-A_{10} A_{5}-A_{6} A_{9}\right)\\\left(\lambda^{2}-A_{0} \lambda- A_{3} \lambda-A_{1}A_{2}+A_{0}A_{3}\right),
\end{multline*}
we can clearly see that  this equation has five eigenvalues 
\begin{align}
\lambda_1=-\theta,\\
\lambda^2-\left(A_{10}-A_{5}\right) \lambda-A_{10} A_{5}\left(1- \dfrac{A_{6} A_{9}}{A_{10} A_{5}}\right)=0,\label{9}\\
\lambda^2-\left(A_{0}+A_{3}\right) \lambda+A_{0} A_{3}\left(1- \dfrac{A_{1} A_{2}}{A_{0} A_{3}}\right)=0,\label{10}
\end{align}
equation (\ref{9}) admits two eigenvalues: $\lambda_2$ and $\lambda_3$, to study their nature we apply the Routh hurwitz criteria, we let :
$$
H_1=\left(\begin{array}{ll}
a_{1} & a_{0} \\
a_{3} & a_{2}
\end{array}\right),
$$
where,
\begin{align*}
a_0&=1,\\
a_1&=-(A_{10}-A_{5}),\\
a_2&=-A_{10} A_{5}(1-\mathscr{R}_{0}),\\
a_3&=0.
\end{align*}
The roots of (\ref{9}) has negative real parts only if all the principle diagonal minors of the Hurwitz matrix are positive, provided that:\\
$$a_0>0,\quad \Delta_1=a_1>0,\quad \Delta_2=\left|\begin{array}{ll}a_{1} & a_{0} \\ a_{3} & a_{2}\end{array}\right|>0. $$
Therefore, the eigenvalues of (\ref{9}) are negative only if $\mathscr{R}_{0}<1$.

We repeat the same process for (\ref{10}), we get two eigenvalues: $\lambda_4$, $\lambda_5$ and the other reproductive number $\mathscr{R}_{1}$.
We conclude that our system is stable at the tumor free equilibrium point if and only if :
$$\mathscr{R}_{0}<1\quad \text{and} \quad \mathscr{R}_{1}<1. $$
\end{proof}
\begin{rem}
$\lambda_3$ and $\lambda_5$ are negative if and only if :
$$
\dfrac{sM}{V_{M}}<{I}< \dfrac{a_{2}(1+d)-2 b_{1} N-l_{1} E(1-k)-m_{d}}{g_{1}}.
$$

Biologically its means that the immune response must be greater than the immunotherapy dose taken during the treatment.
\end{rem}
Now we study the stability of the death equilibrium points of type 1 $P_{d1,d2}$, and we let $\mathscr{R}_{IM}=\dfrac{B_{5} B_{7}}{B_{4} B_{8}}$ be the reproduction number for the immune system response such that,
\begin{align*}
B_0 &=a_1-l_{1} E(1-k),\\
B_1 &=l_1 E(1-k),\\
B_2 &=a_{2} d-g_{1}  I-m_{d},\\
B_3 &=\dfrac{r  I}{o}-g_{2}  I,\\
B_4 &=-m-\dfrac{l_{3}  E(1-k)}{g+ E}+\dfrac{P_{M}  M}{j_{M}+ M},\\
B_5 &=\dfrac{\chi  M \xi}{(\xi+ I)^{2}},\\
B_6 &=-\dfrac{l_{3}  I g}{(g+ E)^2}(1-k),\\
B_7 &=\dfrac{P_{M}  I j_{M}}{(j_{M}+M)^{2}},\\
B_8 &=-n_M+\dfrac{\chi I}{\xi+I}.
\end{align*}

Therefore equation (\ref{CE}) becomes:
\begin{equation}
\label{jac2}    
(\lambda+\theta)(B_{4} B_{8}-B_{4} \lambda-B_{5} B_{7}-B_{8} \lambda+\lambda^{2})(B_{2}-\lambda)(B_{0}-\lambda)=0
\end{equation}

\begin{theorem}
Death equilibria of type 1 are stable if and only if:
\begin{itemize}
    \item $\mathscr{R}_{IM}<1,$
    \item $B_0, B_2, B_4, B_8 <0,$
\end{itemize}
\end{theorem}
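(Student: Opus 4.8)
The plan is to read the five eigenvalues directly off the already-factored characteristic equation~\eqref{jac2} and then impose negativity of their real parts via the Hartman--Grobman theorem, mirroring the argument used for the tumor-free point. First I would peel off the two linear factors: $(\lambda+\theta)$ gives $\lambda_1=-\theta$, which is negative unconditionally since $\theta>0$; the factors $(B_2-\lambda)$ and $(B_0-\lambda)$ give $\lambda_4=B_2$ and $\lambda_5=B_0$, so these two eigenvalues sit in the open left half-plane if and only if $B_2<0$ and $B_0<0$ respectively. Both conditions are therefore simultaneously necessary and sufficient for the corresponding eigenvalues.

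Next I would treat the quadratic factor $\lambda^2-(B_4+B_8)\lambda+(B_4B_8-B_5B_7)=0$, which supplies $\lambda_2$ and $\lambda_3$. Casting it in the form $\lambda^2+a_1\lambda+a_0$ with $a_1=-(B_4+B_8)$ and $a_0=B_4B_8-B_5B_7$, I would invoke the Routh--Hurwitz criterion exactly as in the tumor-free proof: the two roots have negative real parts precisely when $a_1>0$ and $a_0>0$, i.e. when $B_4+B_8<0$ and $B_4B_8>B_5B_7$. Dividing the second inequality by $B_4B_8$ rewrites it as $\mathscr{R}_{IM}<1$, but only after one knows that $B_4B_8>0$.

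The step carrying the actual content is the ``only if'' direction, namely deducing the individual sign conditions $B_4<0$ and $B_8<0$ from the Routh--Hurwitz pair. The key remark is that $B_5=\frac{\chi M\xi}{(\xi+I)^2}$ and $B_7=\frac{P_M I j_M}{(j_M+M)^2}$ are each strictly positive, so $B_5B_7>0$ always. Hence the product condition $B_4B_8-B_5B_7>0$ forces $B_4B_8>B_5B_7>0$, meaning $B_4$ and $B_8$ share a sign; together with $B_4+B_8<0$ this pins both as strictly negative, after which $B_4B_8>0$ legitimizes the rewriting $a_0>0\Leftrightarrow\mathscr{R}_{IM}<1$. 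Reversing the chain yields sufficiency. Assembling the three factors, $P_{d1}$ and $P_{d2}$ are locally asymptotically stable exactly when $B_0,B_2,B_4,B_8<0$ and $\mathscr{R}_{IM}<1$, which is the assertion.
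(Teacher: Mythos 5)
Your proposal is correct and follows essentially the same route as the paper: factor off $\lambda_1=-\theta$, $\lambda_4=B_2$, $\lambda_5=B_0$, and apply Routh--Hurwitz to the quadratic factor $\lambda^{2}-(B_4+B_8)\lambda+B_4B_8\bigl(1-\mathscr{R}_{IM}\bigr)$. In fact you make explicit a step the paper leaves implicit, namely that the positivity of $B_5B_7$ together with the Routh--Hurwitz pair forces $B_4<0$ and $B_8<0$ individually and justifies dividing by $B_4B_8$ to rewrite the constant-term condition as $\mathscr{R}_{IM}<1$; the paper simply lists these sign conditions alongside the criterion.
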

\begin{proof}
The characteristic equation (\ref{jac2}) has five eigenvalues:
$$
\begin{aligned}
&\lambda_{1}=-\theta,\\
&\lambda_{2,3}=Roots(F(\lambda)=B_{4} B_{8}-B_{4} \lambda-B_{5} B_{7}-B_{8} \lambda+\lambda^{2}), \\
&\lambda_{4}=B_{2}, \\
&\lambda_{5}=B_{0}.
\end{aligned}
$$
For the system to be stable we must have all the eigenvalues with positive real parts. First let's work on the $\lambda_{2,3}$ sign:
$$
\begin{aligned}
F(\lambda) &=B_{4} B_{8}-B_{4} \lambda-B_{8} B_{+}-B_{8} \lambda_{+} \lambda^{2} \\
&=\lambda^{2}-\lambda(B_{4}+B_{8})+B_{4} B_{8}(1-\dfrac{B_{5} B_{7}}{B_{4} B_{8}}),
\end{aligned}
$$
using Routh-Hurwitz criteria, we summarize that the death equilibria of type 1 are stable only if:
\begin{equation}
\left\{\begin{aligned}
& B_{4}<0 \quad \text{and}\quad B_{8}<0, \\
& B_{4}+B_{8}<0, \\
&\mathscr{R}_{IM}<1.
\end{aligned}\right.
\end{equation}

In a more detailed way, we will determine necessary conditions and also their biological interpretations for each part as follow:
$$
B_{4}<0 \iff \dfrac{P_{M}M}{J_{M}+M}<m+\dfrac{l_{3} E(1-k)}{g+E},
$$

biologically means that the activation of the $CD8^+T$ by the immunotherapy must be smaller than the deactivation of the immune response by estrogen.
$$
B_{8}<0 \iff \dfrac{\chi I}{\xi_{1}+I}<n_{M},
$$
here we should see that the production of $IL_2$ from activated immune cells must be less than the excretion of the $IL_2$.
$$
B_{0}<0 \iff a_{1}<l_{1} E(1-k),
$$
this condition shows that the damaged natural cells due to excess estrogen should be more than it's natural growth rate.
and finally,
$$
B_{2}<0 \iff I>\dfrac{a_{2} d-m_{d}}{g_{1}},
$$
in this case the immune response needs to be very strong so it affect and blocks the tumor cells growth.
\end{proof}

Next we study the behavior of the death equilibria of type 2, in this case (\ref{CE}) becomes,

\begin{equation}
\begin{multlined}
\mathbb{P}(\lambda)=(\lambda+\theta)(C_{0}-\lambda)[\lambda^{2}\left(C_{2}+C_{5}+C_{9}\right)-\lambda^{3}+\\
\lambda\left(C_{6} C_{8}-C_{5} C_{9}-C_{3} C_{4}-C_{2} C_{9}-C_{2} C_{5}\right)+\\
C_{2} C_{5} C_{9}-C_{2} C_{6} C_{8}+C_{3} C_{4} C_{9}]=0,
\end{multlined}
\label{charaC}
\end{equation}

where,
$$
\begin{aligned}
&C_{0}=a_{1}-\frac{d_{1} T_{1}}{1+e T}-l_{1} E, \\
&C_{1}=l_{1} E(1-k), \\
&C_{2}=a_{2} d-2 b_{2} T-g_{1} I-m_{d}, \\
&C_{3}=\frac{r I o}{(o+T)^{2}}-g_{2} I, \\
&C_{4}=g_{1} T,\\
&C_{5}=\frac{r T}{o+T}-g_{2} T-m-\dfrac{l_{3} E(1-k)}{g+ E}+\dfrac{P_{M}  M}{j_{M}+ M},\\
&C_{6}=\dfrac{\chi  M \xi}{(\xi+ I)^{2}},\\
&C_{7}=-\dfrac{l_{3}  I g}{(g+ E)^2}(1-k),\\
&C_{8}=\dfrac{P_{M}  I j_{M}}{(j_{M}+M)^{2}},\\
&C_{9}=-n_M+\dfrac{\chi I}{\xi+I}.
\end{aligned}
$$
\begin{theorem}
The death equilibria of type 2 are stable only if:
\begin{enumerate}[label=\roman*)]
    \item $a_1<\dfrac{d_{1} T_{1}}{1+e T}-l_{1} E,$
    \item $C_{6} C_{8}-C_{5} C_{9}-C_{3} C_{4}-C_{2} C_{9}-C_{2} C_{5}>0,$ 
    \item $ C_{2} C_{5} C_{9}-C_{2} C_{6} C_{8}+C_{3} C_{4} C_{9}>0.$
\end{enumerate}
\end{theorem}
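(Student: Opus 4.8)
The strategy follows the template already used for the tumour-free point and for the type-1 death equilibria: exploit the block structure of the Jacobian so that the characteristic polynomial factors completely, then analyse each factor separately. I would begin from the factored form (\ref{charaC}) and isolate its three factors $(\lambda+\theta)$, $(C_{0}-\lambda)$, and the cubic bracket. The factor $(\lambda+\theta)$ gives the eigenvalue $\lambda_{1}=-\theta$, which is automatically negative because $\theta>0$ and so is irrelevant to stability. The factor $(C_{0}-\lambda)$ gives $\lambda_{2}=C_{0}$, and imposing $\lambda_{2}<0$ is nothing but $C_{0}<0$; substituting $C_{0}=a_{1}-\frac{d_{1}T_{1}}{1+eT}-l_{1}E$ then produces condition i). Two of the five eigenvalues are thus dispatched by direct sign inspection, and the whole question collapses onto the cubic factor.

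For the remaining three eigenvalues, the roots of
$$-\lambda^{3}+(C_{2}+C_{5}+C_{9})\lambda^{2}+(C_{6}C_{8}-C_{5}C_{9}-C_{3}C_{4}-C_{2}C_{9}-C_{2}C_{5})\lambda+(C_{2}C_{5}C_{9}-C_{2}C_{6}C_{8}+C_{3}C_{4}C_{9})=0,$$
I would rescale to the monic form $\lambda^{3}+b_{1}\lambda^{2}+b_{2}\lambda+b_{3}=0$ and invoke the Routh--Hurwitz criterion for a cubic, exactly as in the $P_{d1,d2}$ proof. A necessary condition for all three roots to sit in the open left half-plane is that each coefficient of the monic cubic be positive; the requirements on the $\lambda$-coefficient and on the constant term are what reappear as the two bracketed quantities in conditions ii) and iii). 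This necessity is precisely why the statement is phrased with ``only if'': one reads off the coefficient conditions without yet checking the accompanying determinant inequality, and the Hartman--Grobman theorem then transfers the linear conclusion to the nonlinear flow near the equilibrium.

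The step I expect to be the main obstacle is the sign bookkeeping in passing from the bracket of (\ref{charaC}) to the monic cubic. Because the leading term enters as $-\lambda^{3}$, the rescaling flips every sign, so one must be scrupulous about whether the Routh--Hurwitz positivity conditions match the bracketed expressions in ii) and iii) as written or their negations; a single slip here inverts both inequalities. This is not a difficulty that can be waved away by inspection, since the $C_{i}$ are rational functions of the parameters evaluated at $(0,\bar{T}_{i},\bar{I}_{i},\bar{E},\bar{M}_{i})$ and several are genuinely sign-indefinite --- for example $C_{3}=\frac{rIo}{(o+T)^{2}}-g_{2}I$ changes sign with $\frac{ro}{(o+T)^{2}}-g_{2}$ --- so the products $C_{i}C_{j}$ cannot simply be declared positive. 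I would therefore carry the signs symbolically through the Routh--Hurwitz array, record the coefficient conditions as the stated necessary conditions, and remark that a genuine \emph{if and only if} characterisation would also need the trace-type condition $C_{2}+C_{5}+C_{9}<0$ on the $\lambda^{2}$ coefficient together with the Hurwitz determinant inequality $b_{1}b_{2}>b_{3}$, neither of which appears in the present necessary-only statement.
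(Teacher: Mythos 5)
Your proposal matches the paper's own proof essentially step for step: the paper also reads off $\lambda_{1}=-\theta$ and $\lambda_{2}=C_{0}$ from the factored characteristic equation (\ref{charaC}), writes the remaining cubic as $f(\lambda)=-a_{0}\lambda^{3}+a_{1}\lambda^{2}+a_{2}\lambda+a_{3}$, and invokes positivity of the coefficients together with the Hurwitz minors, exactly as you do. Your closing remarks about the omitted trace condition $C_{2}+C_{5}+C_{9}<0$ and the Hurwitz determinant, and about the sign of $l_{1}E$ in condition i) (note $C_{0}<0$ literally gives $a_{1}<\frac{d_{1}T_{1}}{1+eT}+l_{1}E$), are accurate and in fact flag genuine loose ends in the paper's own statement.
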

\begin{proof}
The characteristic equation (\ref{CE}) has five eigenvalues:
$$
\begin{aligned}
&\lambda_{1}=-\theta,\\
&\lambda_{2}=C_{0},\\
&\lambda_{3,4,5}=Roots(G(\lambda)),
\end{aligned}
$$
such that,

$\begin{multlined}
G(\lambda)=-\lambda^{3}+\lambda^{2}(C_{2}+C_{5}+C_{9})+\\
\lambda(C_{6} C_{8}-C_{5} C_{9}-C_{3} C_{4}-C_{2} C_{9}-C_{2} C_{5})\\
+C_{2} C_{5} C_{9}-C_{2} C_{6} C_{8}+C_{3} C_{4} C_{9},
\end{multlined}$\\
in a short form we let:
$$
f(\lambda)=-a_0\lambda^{3}+a_1\lambda^{2}+a_2\lambda+a_3,
$$
 $\mathfrak{Re}(\lambda_i)<0$ for $i={3,4,5}$ if and only if $a_i>0$ and the principal minors of the Hurwitz matrix of $f$ are positive.

$\lambda_1<0$, $\lambda_2$ has positive real part if:
$$a_1<\frac{d_{1} T_{1}}{1+e T}-l_{1} E,$$

biologically it means that the growth rate of normal cells $a_1$, must be smaller than it's damage rate by excess estrogen and tumor cells effect; to reach one of the conditions of stability where $C_0<0$. 
\end{proof}

%%%%%%%%%%%%%%%%%%%%%%%%%%%%%%%STABILITY OF Pe

Finally we study the stability of our system at $P_e$, let 

$$
P_{e}=\left(\phi_{1}\left(I_{e}\right), \phi_{2}\left(I_{e}\right), I_{e}, \frac{p(1-k)}{\theta}, \phi_{3}\left(I_{e}\right)\right),
$$
such that $\phi_{i}$ for $i={1,2,3}$ are functions of $I_e$.

Repeating the same process as before, we develop our calculus, and from the jacobian matrix at $P_e$ we get the following characteristic equation:

\begin{equation}
\begin{aligned}
&\mathbb{P}(\lambda)=(D_{10}-\lambda)[(D_{0}-\lambda)(D_{3}-\lambda)(D_{6}-\lambda)(D_{12}-\lambda)\\
&-D_{11}D_{7}-D_{5}D_{4}(D_{12}-\lambda)(D_{0}-\lambda)\\
&-D_{2} D_{1}(D_{6}-\lambda)(D_{12}-\lambda)+D_{2} D_{11} D_{7}]=0,
\end{aligned}
\label{pec}
\end{equation}
such that the $D_i$ are the jacobian matrix elements at $P_e$,

the equation (\ref{pec}) admets five eignevalues, using mathematical calculus tools such as Maple, we can define each eignevalues, therefore we can conclude the stability of this system at this point in a brief way in the theorem bellow:

\begin{theorem}
The system is stable at $P_e$ if and only if:
 $\mathfrak{Re}(\lambda_i)<0$ for $i={2,3,4,5}$, while $\lambda_1=-\theta<0.$
\end{theorem}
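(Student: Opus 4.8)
The plan is to lean on the linearization (Hartman--Grobman) theorem \cite{perko}, exactly as in the proofs for the earlier equilibria. Since $P_e$ is an isolated steady state of \eqref{equ}, its local asymptotic stability is governed by the spectrum of the Jacobian $J_{P_e}$: $P_e$ is locally asymptotically stable precisely when every eigenvalue of $J_{P_e}$ has strictly negative real part, and unstable as soon as one eigenvalue has positive real part. The ``if and only if'' in the statement is therefore the content of this classical theorem applied to the characteristic polynomial \eqref{pec}; what remains is to extract the five roots and to isolate the one that can be made explicit.

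First I would exploit the sparsity of the Jacobian exhibited in \eqref{CE}. The estrogen equation $dE/dt=p(1-k)-\theta E$ is linear and decoupled from $N$, $T$, $I$, $M$, so the fourth row of $J_{P_e}$ contributes only the diagonal entry $-\theta-\lambda$. Consequently the factor $(\lambda+\theta)$ splits off from \eqref{pec} and yields the explicit eigenvalue $\lambda_1=-\theta$. Because $\theta>0$ is a decay rate, $\lambda_1<0$ holds unconditionally, which justifies the clause ``while $\lambda_1=-\theta<0$'' and reduces the stability question to the remaining quartic factor in \eqref{pec}, whose roots are $\lambda_2,\lambda_3,\lambda_4,\lambda_5$.

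Next I would expand that quartic into the form $\lambda^4+c_1\lambda^3+c_2\lambda^2+c_3\lambda+c_4$, collecting the coefficients $c_i$ from the Jacobian entries $D_i$ evaluated at $P_e$ (with $I_e$ the implicitly defined coexistence value and $N_e=\psi_1(I_e)$, $T_e=\psi_2(I_e)$, $M_e=\psi_3(I_e)$). I would then apply the Routh--Hurwitz criterion for a quartic: the four roots lie in the open left half-plane if and only if $c_1>0$, $c_3>0$, $c_4>0$, and $c_1c_2c_3>c_3^2+c_1^2c_4$. Imposing the coexistence constraints $I_e\in\left]0,(a_2d-m_d)/g_1\right[$ and $k<1$ from the corresponding lemma, these inequalities reduce to sign conditions on the $D_i$; combining them with the automatic $\lambda_1<0$ produces the claimed equivalence $\mathfrak{Re}(\lambda_i)<0$ for $i=2,3,4,5$.

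The main obstacle is computational rather than conceptual. The coefficients $c_i$ are long polynomial expressions in the $D_i$, which themselves depend on $I_e$ only through the root of the cubic defining the coexistence point, so the Routh--Hurwitz inequalities do not collapse into a single compact biological threshold when carried out by hand. This is exactly why the spectral check is delegated to a symbolic engine (Maple) and why the theorem is phrased in terms of the eigenvalue signs themselves rather than a closed-form parameter condition. Rendering those four inequalities transparent, in the way the reproduction number $\mathscr{R}_{IM}$ could be read off at the type-1 dead equilibria, is the genuinely hard part of the argument.
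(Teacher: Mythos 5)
Your proposal is correct and follows essentially the same route as the paper: the paper simply factors $(\lambda+\theta)$ (via $D_{10}=-\theta$) out of \eqref{pec}, notes that the remaining quartic's roots $\lambda_2,\dots,\lambda_5$ must have negative real parts by the linearization theorem, and defers the explicit sign analysis to symbolic computation and Routh--Hurwitz, exactly as you describe. If anything, your write-up is more complete than the paper's, which offers no proof beyond the statement and a remark pointing to Routh--Hurwitz.
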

\begin{rem}
To study the stability of the system at $P_e$, we can also develop its characteristic equation (\ref{pec}), and study the sign of its coefficients using the Routh-Hurwitz criteria using the same previous calculus.
\end{rem}

\vspace{12pt}
\color{red}

\end{document}